\newtheorem{theorem}{Theorem}[section]
\newtheorem*{theorem A}{Theorem A}
\newtheorem*{theorem B}{N\"olker's Theorem}
\newtheorem{lemma}{Lemma}[section]
\newtheorem{corollary}{Corollary}[section]
\newtheorem{problem}{Problem}
\theoremstyle{remark}
\theoremstyle{remark}
\theoremstyle{definition}
\numberwithin{equation}{section}
\def\({\left ( }
\def\){\right )}
\def\<{\left < }
\def\>{\right >}
\begin{document}
\title{affine translation hypersurfaces in Euclidean and isotropic spaces}
\author{Muhittin Evren Aydin}
\address{Department of Mathematics, Faculty of Science, Firat University,
Elazig, 23200, Turkey}
\email{meaydin@firat.edu.tr}
\thanks{}
\subjclass[2000]{53A05, 53A35, 53B25.}
\keywords{Affine translation hypersurface, Gauss-Kronocker curvature,
isotropic space, relative curvature, isotropic mean curvature, Laplacian.}

\begin{abstract}
In this paper, we extend the notion of affine translation surfaces
introduced by Liu and Yu (Proc. Japan Acad. Ser. A Math. Sci. 89, 111--113,
2013) in a Euclidean space $\mathbb{R}^{3}$ to higher dimensional ambient
spaces. We provide that an affine translation hypersurface of constant
Gauss-Kronocker curvature $K_{0}$ in $\mathbb{R}^{n+1}$ is a cylinder, i.e. $%
K_{0}=0$. As further applications we describe such hypersurfaces in the
isotropic spaces satisfying certain conditions on the isotropic curvatures
and the Laplacian.
\end{abstract}

\maketitle

\section{Introduction}

Let $\mathbb{R}^{n+1}$ be a Euclidean space and $\left(
x_{1},x_{2},...,x_{n+1}\right) $ the orthogonal coordinate system in $%
\mathbb{R}^{n+1}$. Then a hypersurface in $\mathbb{R}^{n+1},$ $n\geq 2,$ is
called \textit{translation hypersurface} if it is the graph of the form%
\begin{equation}
x_{n+1}\left( x_{1},x_{2},...,x_{n}\right) =f_{1}\left( x_{1}\right)
+f_{2}\left( x_{2}\right) +...+f_{n}\left( x_{n}\right) ,  \tag{1.1}
\end{equation}%
where $f_{1},f_{2},...,f_{n}$ are real-valued smooth functions of one
variable (see \cite{2,7,30}). These hypersurfaces are obtained by
translating the curves (called \textit{generating curves}) lying in mutually
orthogonal planes of $\mathbb{R}^{n+1}$.

Dillen et al. \cite{7} proved that a minimal (vanishing mean curvature)
translation hypersurface in $\mathbb{R}^{n+1}$ is either a hyperplane or a
product manifold $M^{2}\times \mathbb{R}^{n-2},$ where $M^{2}$ is \textit{Scherk's
minimal translation surface} in $\mathbb{R}^{3}$ given in explicit form 
\begin{equation*}
x_{3}\left( x_{1},x_{2}\right) =\frac{1}{c}\log \left\vert \frac{\cos \left(
cx_{1}\right) }{\cos \left( cx_{2}\right) }\right\vert ,\text{ }c\in \mathbb{%
R-}\left\{ 0\right\} .
\end{equation*}%
In 3-dimensional context, many different generalizations of Scherk's surface were treated on $\mathbb{A}^{3}$ \cite{9,31}, $Nil_{3}$ \cite{12}, $\mathbb{H}^{3}$ \cite{16}, $Sol_{3}$ \cite{17}, $\mathbb{R}^{3}$ \cite{18,19}.

Constant Gauss-Kronocker curvature (CGKC) and constant mean curvature (CMC)
translation hypersurfaces in $\mathbb{R}^{n+1}$ (also in the
Lorentz-Minkowski space $\mathbb{R}_{1}^{n+1}$) were described in \cite{28}
by Seo. For lightlike counterparts of such results see \cite{11}.

Most recently, Moruz and Munteanu \cite{22} conjectured a new class of
translation hypersurfaces in $\mathbb{R}^{4}$ as the graph of the form%
\begin{equation*}
x_{4}\left( x_{1},x_{2},x_{3}\right) =f_{1}\left( x_{1}\right) +f_{2}\left(
x_{2},x_{3}\right) .
\end{equation*}%
This one appears as the sum of a curve in $x_{1}x_{4}-$plane and a graph surface
in $x_{2}x_{3}x_{4}-$space. Immediately afterwards this new concept was generalized to higher dimensionals by Munteanu et al. \cite{23} as considering the form 
\begin{equation}
x_{n+m+1}\left( x_{1},x_{2},...,x_{n+m}\right) =f_{1}\left(
x_{1},x_{2},...,x_{n}\right) +f_{2}\left( x_{n+1},x_{n+2},...,x_{n+m}\right) \tag{1.2}
.
\end{equation}%
The graph of the form (1.2) in $\mathbb{R}^{n+m+1}$ is called \textit{translation graph}. The authors in \cite{22,23} obtained new classifications and results by imposing the minimality condition. Due to the above framework, the following problems can be stated:

\begin{problem}
To obtain CMC and CGKC translation hypersurfaces in $\mathbb{R}^{n+1}$ (as defined by Dillen
et al.) whose either
\begin{enumerate}
\item the generating curves are planar lying in non-orthogonal
planes; or

\item some of them generating curves are planar, others are not; or

\item the generating curves are all non-planar (space curves).
\end{enumerate}
\end{problem}

\begin{problem}
 To characterize CGKC and CMC translation graphs in $\mathbb{R}^{n+1}$ (as defined by Moruz et al.) without imposing restrictions.
\end{problem}

This study aims to solve a part of first item of Problem 1, that is, to classify
the CGKC translation hypersurfaces whose the generating curves lie in
non-orthogonal planes. For this, we are motivated by the notion of \textit{%
affine translation surface} introduced by Liu and Yu \cite{14} as a graph of
the form 
\begin{equation}
x_{3}\left( x_{1},x_{2}\right) =f_{1}\left( x_{1}\right) +f_{2}\left(
x_{2}+cx_{1}\right)  \notag
\end{equation}%
for some nonzero constant $c$. Such surfaces with CMC were classified in 
\cite{15}. By a change of parameter, its parameterization turns to%
\begin{equation*}
r\left( u,v\right) =\left( u,v-cu,f_{1}\left( u\right) +f_{2}\left( v\right)
\right) ,
\end{equation*}%
which implies that the generating curves lie in non-orthogonal planes. In
order to achieve our purpose, we consider the graph in $\mathbb{R}^{n+1}$ of
the form 
\begin{equation}
x_{n+1}\left( x_{1},x_{2},...,x_{n}\right) =f_{1}\left( y_{1}\right)
+f_{2}\left( y_{2}\right) +...+f_{n}\left( y_{n}\right) ,  \tag{1.3}
\end{equation}%
where 
\begin{equation}
y_{i}=\sum_{j=1}^{n}a_{ij}x_{j},\text{ }i=1,2,...,n.  \tag{1.4}
\end{equation}%
If $A=\left( a_{ij}\right) $ in (1.4) is non-orthogonal regular matrix, then
we call the graph of the form (1.3) \textit{affine translation hypersurface }%
and $\left( y_{1},y_{2},...,y_{n}\right) $ \textit{affine parameter
coordinates.} Note that the generating curves of an affine translaiton hypersurface lie in non-orthgonal planes due to the non-orthogonality of $A$.

In the particular case $y_{1}=x_{1},$ $y_{2}=x_{2},$ ..., $y_{n-1}=x_{n-1}$, Yang and Fu \cite{31} proposed to obtain some curvature classifications
for such a hypersurface in $\mathbb{R}^{n+1}$. In more general case, we provide the following: 

\begin{theorem}
Let $M^{n}$ be an affine translation hypersurface in $\mathbb{R}^{n+1}$ with
CGKC $K_{0}.$ Then it is congruent to a cylinder, i.e. $K_{0}=0$.
\end{theorem}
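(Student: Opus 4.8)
The plan is to realize the hypersurface as the graph $x_{n+1}=F(x_1,\dots,x_n)$ with $F=\sum_{i=1}^{n}f_i(y_i)$ and to compute its Gauss--Kronecker curvature directly in the $y$-coordinates, which are legitimate global coordinates because $A$ is regular. Writing $\phi_i=f_i'(y_i)$ and $\psi_i=f_i''(y_i)$, the chain rule gives $\partial F/\partial x_k=\sum_i a_{ik}\phi_i$ and $\partial^2 F/\partial x_k\partial x_l=\sum_i a_{ik}a_{il}\psi_i$, so the Hessian factors as $\operatorname{Hess}F=A^{\top}DA$ with $D=\operatorname{diag}(\psi_1,\dots,\psi_n)$. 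Hence $\det(\operatorname{Hess}F)=(\det A)^2\prod_i\psi_i$, and since $1+|\nabla F|^2=1+\phi^{\top}G\phi$ with $G=AA^{\top}$, the graph formula $K=\det(\operatorname{Hess}F)\,(1+|\nabla F|^2)^{-(n+2)/2}$ turns the constancy hypothesis into the single equation
\begin{equation*}
(\det A)^2\prod_{i=1}^{n}\psi_i=K_0\,W^{(n+2)/2},\qquad W:=1+\phi^{\top}G\phi .
\end{equation*}

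I would then argue by contradiction, assuming $K_0\neq 0$. Then $\prod_i\psi_i$ never vanishes, so each $\psi_i=f_i''$ has constant sign and each $\phi_i$ is strictly monotone in $y_i$; consequently the point $(\phi_1,\dots,\phi_n)$ sweeps out an open box $I_1\times\cdots\times I_n$ as $y$ ranges over its domain, a fact I will exploit decisively below. Differentiating the logarithm of the displayed equation in $y_k$, and using $\partial_{y_k}W=2\psi_k S_k$ where $S_k:=(G\phi)_k=\sum_j g_{kj}\phi_j$, I get
\begin{equation*}
\frac{\psi_k'}{\psi_k^{2}}\,W=(n+2)\,S_k .
\end{equation*}
Differentiating this once more in a second variable $y_l$ with $l\neq k$ (again $\partial_{y_l}W=2\psi_l S_l$, and $\partial_{y_l}S_k=g_{kl}\psi_l$), then eliminating the factor $\psi_k'/\psi_k^{2}$ via the previous identity, yields the key relation
\begin{equation*}
2\,S_k S_l=g_{kl}\,W\qquad(k\neq l).
\end{equation*}

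The decisive step is the observation that, thanks to the box property, this relation holds identically on an open set of the $\phi$-space and is therefore a genuine \emph{polynomial} identity in the independent indeterminates $\phi_1,\dots,\phi_n$. On the left, $S_kS_l=(G\phi)_k(G\phi)_l$ is a homogeneous quadratic with no constant term, whereas on the right $g_{kl}W=g_{kl}(1+\phi^{\top}G\phi)$ has constant term $g_{kl}$; comparing constant terms forces $g_{kl}=0$ for all $k\neq l$. The identity then collapses to $(G\phi)_k(G\phi)_l\equiv 0$, which is impossible since $G$ is invertible, so each $(G\phi)_k$ is a nonzero linear form and a product of two nonzero polynomials cannot vanish. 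This contradiction gives $K_0=0$; then $\prod_i\psi_i\equiv 0$ forces some $f_i''\equiv 0$, whose generating line renders $M^n$ a cylinder. The main obstacle is not the (routine but error-prone) double differentiation but rather the justification that the resulting functional relation may be read as a polynomial identity: this is exactly the place where the non-vanishing of the $\psi_i$, i.e. the assumption $K_0\neq 0$, is consumed, and where the non-diagonality that an orthogonal separation of variables would need is bypassed altogether.
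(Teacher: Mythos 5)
Your proposal is correct, and its computational core coincides with the paper's: the factorization $Hess\,F=A^{T}\cdot D\cdot A$ is the paper's Lemma 3.1, your displayed constancy equation is (3.11), the logarithmic-derivative identity $\psi_k'W/\psi_k^{2}=(n+2)S_k$ is (3.13), and your key relation $2S_kS_l=g_{kl}W$ is exactly the paper's (3.14), with $g_{kl}=\sum_i a_{ki}a_{li}$ the entries of $AA^{T}$. Where you genuinely diverge is in how the contradiction is closed. The paper differentiates (3.13) twice more in $y_q$ to extract $\sum_i a_{pi}a_{qi}=0$ (its (3.15)), substitutes back into (3.14) to conclude that $S_p$ or $S_q$ vanishes, and differentiates once more to reach $\sum_i a_{pi}^2=0$, contradicting $\det A\neq 0$. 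You instead note that, since each $\psi_i$ is nonvanishing, each $\phi_i$ is strictly monotone, so $\phi=(\phi_1,\dots,\phi_n)$ fills an open box and $2S_kS_l=g_{kl}W$ may be read as a polynomial identity in the indeterminates $\phi_i$; comparing constant terms gives $g_{kl}=0$, and the integral-domain property of $\mathbb{R}[\phi_1,\dots,\phi_n]$ rules out $S_kS_l\equiv 0$ because $G=AA^{T}$ is invertible and so has no zero row. This buys you a cleaner finish: it replaces the paper's further differentiations and its pointwise ``either/or'' alternative (which, as stated there, needs a small connectedness argument to become an identity) with a single algebraic observation, and it makes explicit that the only structural input is the positive-definiteness of $AA^{T}$. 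The one step that requires care --- promoting a functional relation to a polynomial identity --- is exactly where you spend the hypothesis $K_0\neq 0$, and your open-box justification handles it correctly; the $K_0=0$ case matches the paper's Case 1, since $\prod_i\psi_i\equiv 0$ with factors depending on disjoint variables forces some $f_i''\equiv 0$.
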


Combining this with the result of Seo \cite[Theorem 2.5]{28}, we derive:

\begin{corollary}
There is no a translation hypersurface in $\mathbb{R}^{n+1}$ with nonzero
CGKC provided the generating curves are all planar.
\end{corollary}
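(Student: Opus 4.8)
The plan is to exploit the hypothesis that every generating curve is planar in order to put $M^{n}$, up to a rigid motion, into the affine translation form (1.3)--(1.4), and then to conclude from Theorem 1.1 together with Seo's Theorem 2.5 in \cite{28}. Recall that a translation hypersurface is the image of $\Phi(t_{1},\dots,t_{n})=\gamma_{1}(t_{1})+\cdots+\gamma_{n}(t_{n})$ with generating curves $\gamma_{i}$, and that translating an individual $\gamma_{i}$ only translates $M$; so, up to congruence, I may assume each planar $\gamma_{i}$ lies in a $2$-plane $\Pi_{i}$ through the origin. Writing $\gamma_{i}$ as a graph over a direction $u_{i}\in\Pi_{i}$ together with the graph direction $e_{n+1}$ and introducing linear coordinates $y_{i}=\sum_{j}a_{ij}x_{j}$ adapted to the $u_{i}$, I would rewrite $M$ as the graph $x_{n+1}=f_{1}(y_{1})+\cdots+f_{n}(y_{n})$; the requirement that $\Phi$ be an immersed hypersurface is exactly the regularity of the matrix $A=(a_{ij})$, so $M$ takes the form (1.3)--(1.4).

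With $M$ in this form I would split on $A$. If $A$ is orthogonal, the planes $\Pi_{i}$ are mutually orthogonal, $M$ is a translation hypersurface in the sense of Dillen et al.\ (the graph (1.1)), and Seo's Theorem 2.5 forces its constant Gauss--Kronecker curvature to vanish. If $A$ is \emph{not} orthogonal, then $M$ is by definition an affine translation hypersurface and Theorem 1.1 gives $K_{0}=0$. Since the planarity of all generating curves admits no third possibility, every such $M$ of constant Gauss--Kronecker curvature is a cylinder, and a nonzero CGKC is impossible; this is exactly the assertion of the corollary.

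I expect the main obstacle to be the normalization step, not the case analysis. One must justify that planarity of all the $\gamma_{i}$ permits us, after a rigid motion, to arrange the planes $\Pi_{i}$ so that they share the graph direction $e_{n+1}$, for only then does $M$ acquire the special form (1.3). For $n=2$ this is automatic, since any two planes through the origin in $\mathbb{R}^{3}$ meet along a line that may be rotated to $e_{n+1}$. For $n\ge 3$ the planes need not share a common direction in general, so this reduction carries the real content: here I would have to use the hypothesis that $M$ is a genuine codimension-one graph to bring the configuration into the form (1.3), after which one checks that the $\Pi_{i}$ are mutually orthogonal precisely when $A$ is orthogonal, so that Seo's theorem and Theorem 1.1 between them exhaust all admissible cases.
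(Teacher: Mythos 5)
Your proof is correct and follows essentially the same route as the paper: the corollary is obtained there in one line by splitting on whether the matrix $A=(a_{ij})$ in (1.4) is orthogonal, invoking Seo's Theorem 2.5 in the orthogonal case and Theorem 1.1 in the non-orthogonal case. The normalization issue you flag for $n\ge 3$ (that $n$ planar generating curves need not lie in $2$-planes sharing the graph direction $e_{n+1}$, so the reduction to the form (1.3)--(1.4) is not automatic) is a genuine subtlety, but the paper sidesteps it rather than resolves it: in the context of Problem 1(1) a ``translation hypersurface with planar generating curves'' is taken to mean precisely a graph of the form (1.3)--(1.4), so the orthogonal/non-orthogonal dichotomy is exhaustive by definition and your case analysis is exactly the intended argument.
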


Further we classify these hypersurfaces in isotropic spaces satisfying certain conditions on the
isotropic curvatures and the Laplacian.

\section{Preliminaries}

\subsection{Basics on hypersurfaces in $\mathbb{R}^{n+1}$}

Let $M^{n},\mathbb{S}^{n},\left\langle \cdot ,\cdot \right\rangle$ and $%
\left\Vert \cdot \right\Vert$ denote a hypersurface, the standard
hypersphere, the Euclidean scalar product and the induced norm of $\mathbb{R}%
^{n+1}$, respectively. For further properties of submanifolds in $\mathbb{R}%
^{n+1}$ see \cite{3}.

The map $\nu :M^{n}\longrightarrow \mathbb{S}^{n}$ in $\mathbb{R}^{n+1}$ is
called \textit{Gauss map} of $M^{n}$ and its differential $d\nu$ is known as
the \textit{shape operator} $A$ of $M^{n}.$ Let $T_{p}M^{n}$ be the tangent
space at a point $p\in M^{n},$ then the following occurs: 
\begin{equation*}
\left\langle A_{p}\left( x_{p}\right) ,y_{p}\right\rangle =\left\langle d\nu
\left( x_{p}\right) ,y_{p}\right\rangle ,\text{ }x_{p},y_{p}\in T_{p}M^{n},
\end{equation*}%
where the induced metric on $M^{n}$ from $\mathbb{R}^{n+1}$ is denoted by
same symbol $\left\langle \cdot ,\cdot \right\rangle .$

The real number $\det \left( A_{p}\right) $ is called the \textit{%
Gauss-Kronocker curvature }of $M^{n}$ at $p\in M^{n}$. A hypersurface in $%
\mathbb{R}^{n+1}$ for which the Gauss-Kronocker curvature at each point is
zero is called \textit{flat.}

The graph hypersurface in $\mathbb{R}^{n+1}$ of a given real-valued smooth
function $z=z\left( x_{1},x_{2},...,x_{n}\right) $ is of the form%
\begin{equation*}
r:\mathbb{R}^{n}\longrightarrow \mathbb{R}^{n+1}, \text{ } r\left(
x_{1},x_{2},...,x_{n}\right) =\left( x_{1},x_{2},...,x_{n},z\left(
x_{1},x_{2},...,x_{n}\right) \right) .
\end{equation*}%
The Gauss-Kronecker curvature $K$ of such a hypersurface in $\mathbb{R}%
^{n+1} $ turns to%
\begin{equation}
K=\frac{\det \left( Hess\left( z\right) \right) }{\left(
1+\sum_{i=1}^{n}\left( z_{,x_{i}}\right) ^{2}\right) ^{\frac{n+2}{2}}}, 
\tag{2.1}
\end{equation}%
where $z_{,x_{i}}=\frac{\partial z}{\partial x_{i}}$ and $Hess\left(
z\right) $ is the Hessian of $z$, namely%
\begin{equation}
Hess\left( z\right) =%
\begin{bmatrix}
z_{,x_{1}x_{1}} & z_{,x_{1}x_{2}} & ... & z_{,x_{1}x_{n}} \\ 
z_{,x_{2}x_{1}} & z_{,x_{2}x_{2}} & ... & z_{,x_{2}x_{n}} \\ 
\vdots & \vdots & ... & \vdots \\ 
z_{,x_{n}x_{1}} & z_{,x_{n}x_{2}} & ... & z_{,x_{n}x_{n}}%
\end{bmatrix}
\tag{2.2}
\end{equation}
for $z_{,x_{i}x_{j}}=\frac{\partial ^{2}z}{\partial x_{i}\partial x_{j}},$ $%
i,j=1,2,...,n$.

\subsection{Basics on hypersurfaces in $\mathbb{I}^{n+1}$}

For general references of the isotropic space $\mathbb{I}^{n+1}$ we refer to 
\cite{5,8,20,21} and \cite{24}-\cite{27}. $\mathbb{I}^{n+1}$ is based on the
following group of motions%
\begin{equation}
\begin{bmatrix}
A & 0 \\ 
B & 1%
\end{bmatrix}%
,  \tag{2.3}
\end{equation}%
where $A \in \mathbb{R}_{n}^{n}$ is an orthonogal $n\times n-$matrix and $%
B\in \mathbb{R}_{1}^{n}$ is a $\left( 1\times n\right) -$matrix.

The \textit{isotropic distance} of $\mathbb{I}^{n+1}$\ which is an invariant
under $\left( 2.3\right) $ is defined as 
\begin{equation}
\left\Vert p-q\right\Vert _{i}=\sqrt{\sum_{j=1}^{n}\left( q_{j}-p_{j}\right)
^{2}}  \tag{2.4}
\end{equation}%
for $p=\left( p_{1},p_{2},...,p_{n+1}\right) ,$ $q=\left(
q_{1},q_{2},...,q_{n+1}\right) \in \mathbb{I}^{n+1}.$ Thereby $\mathbb{I}%
^{n+1}$ can appear as a real affine space endowed with the metric (2.4).

Let $\left( x_{1}, x_{2},...,x_{n+1}\right) $ be the standart affine
coordinates of $\mathbb{I}^{n+1}.$ The metric (2.4) is degenerate along $%
x_{n+1}-$direction and we call the lines in $x_{n+1}-$direction \textit{%
isotropic} \textit{lines}. The $k-$plane involving an isotropic line is
called \textit{isotropic }$k-$\textit{plane.}

A hypersurface in $\mathbb{I}^{n+1}$ is called \textit{admissible} if
nowhere it has isotropic tangent hyperplane.

A \textit{graph hypersurface} $M^{n}$ in $\mathbb{I}^{n+1}$ of a given
smooth function $z\left( x_{1},x_{2},...,x_{n}\right)$ is of the form%
\begin{equation*}
r:\mathbb{R}^{n}\longrightarrow \mathbb{I}^{n+1}, \text{ } r\left(
x_{1},x_{2},...,x_{n}\right) =\left( x_{1},x_{2},...,x_{n},z\left(
x_{1},x_{2},...,x_{n}\right) \right) .
\end{equation*}
Note that $M^{n}$ is admissible since its tangent hyperplane spanned by $%
\left\{ r_{,x_{1}},r_{,x_{2}},...,r_{,x_{n}}\right\} $ does not involve an
isotropic line.

The induced metric $\left\langle \cdot ,\cdot \right\rangle $ on $M^{n}$
from $\mathbb{I}^{n+1}$ is given by 
\begin{equation}
\left\langle \cdot ,\cdot \right\rangle =dx_{1}^{2}+...+dx_{n}^{2}. 
\tag{2.5}
\end{equation}%
Thus, its Laplacian becomes%
\begin{equation}
\bigtriangleup =\sum_{i=1}^{n}\frac{\partial ^{2}}{\partial x_{i}^{2}}. 
\tag{2.6}
\end{equation}

Now let us consider a curve on $M^{n}$ that has the position vector 
\begin{equation}
r=r\left( s\right) =\mathbf{x}\left( s\right) +z\left( s\right) e_{n+1}, 
\tag{2.7}
\end{equation}
where 
\begin{equation}
\mathbf{x}\left( s\right) =\left( x_{1}\left( s\right) ,x_{2}\left( s\right)
,...,x_{n}\left( s\right),0 \right), \text{ } e_{n+1}=\left( \underset{%
n-tuple}{\underbrace{0,0,...0}},1\right) .  \notag
\end{equation}
Derivating of (2.7) with respect to $s$ leads to%
\begin{equation}
r^{\prime } =\mathbf{x}^{\prime }+ \left\langle \mathbf{x}^{\prime } ,\nabla
z \right\rangle e_{n+1},  \tag{2.8}
\end{equation}%
where $\nabla $ denotes the gradient operator in $\mathbb{R}^{n}.$ By again
derivating of $\left( 2.8\right) $ with respect to $s,$ we arrange the
following%
\begin{equation}
r^{\prime \prime } =\mathbf{x}^{\prime \prime }+ \left\langle \mathbf{x}%
^{\prime \prime } ,\nabla z \right\rangle e_{n+1}+ \left( \mathbf{X}^{\prime
}\right)^{T}\cdot Hess\left( z \right)\cdot \mathbf{X}^{\prime } e_{n+1}, 
\tag{2.9}
\end{equation}%
where $\mathbf{X}^{\prime } $ is column matrix associated to $\mathbf{x}%
^{\prime }$ and $\left( \mathbf{X}^{\prime }\right)^{T}$ its transpose.
Therefore, in (2.9), the following decomposition occurs:%
\begin{equation*}
Tan\left( r^{\prime \prime } \right) =\mathbf{x}^{\prime \prime }
+\left\langle \mathbf{x}^{\prime \prime } ,\nabla z \right\rangle e_{n+1}
\end{equation*}%
and 
\begin{equation*}
Nor\left( r^{\prime \prime } \right) = \left( \mathbf{X}^{\prime
}\right)^{T}\cdot Hess\left( z \right) \cdot \mathbf{X}^{\prime } e_{n+1},
\end{equation*}%
where $Tan\left( r^{\prime \prime }\right)$ implies the projection of $%
r^{\prime \prime }$ onto tangent hyperplane of $M^{n}$ and $Nor\left(
r^{\prime \prime } \right)$ the isotropic component of $r^{\prime \prime }$
which is normal to $M^{n}$.

If $\left\Vert Tan\left( r^{\prime \prime } \right) \right\Vert _{i} \neq 0$
then it is called \textit{geodesic curvature function} $\kappa _{G}$ of $r .$
Otherwise $\kappa _{G}=1$ is assumed. Accordingly the following function is
called \textit{normal curvature function} $\kappa _{N}$ of $r$: 
\begin{equation}
\kappa _{N}= \left( \mathbf{X}^{\prime }\right)^{T}\cdot Hess\left( z
\right)\cdot \mathbf{X}^{\prime }.  \tag{2.10}
\end{equation}

The extremal values $\kappa _{1},...,\kappa _{n}$ of (2.10) corresponding to
the eigenvalue functions of $Hess\left( z\right) $ are called \textit{%
principal curvatures} of $M^{n}.$ Since $Hess\left( z \right) $ is
symmetric, all eigenvalue functions are real. Thus one gives rise to define
the following certain curvature functions:%
\begin{equation}
K_{i}=\frac{1}{\binom{n}{i}}\left( \kappa _{1}...\kappa _{i}+\kappa
_{1}...\kappa _{i-1}\kappa _{i+1}+...+\kappa _{n-i+1}...\kappa _{n}\right) .
\tag{2.11}
\end{equation}%
By $\left( 2.11\right) ,$ the \textit{isotropic mean curvature function} $%
H=K_{1}$ is%
\begin{equation}
H=\frac{1}{n}trace\left( Hess\left( z\right) \right) =\frac{1}{n}%
\bigtriangleup z  \tag{2.12}
\end{equation}%
and the \textit{relative curvature} (or \textit{isotropic Gaussian curvature}%
) \textit{function } $K=K_{n}$ 
\begin{equation}
K=\det \left( Hess\left( z\right) \right).  \tag{2.13}
\end{equation}

A hypersurface in $\mathbb{I}^{n+1}$ with vanishing relative curvature (resp.
isotropic mean curvature) is called \textit{isotropic flat }(resp. \textit{%
isotropic minimal}).

\section{Affine translation hypersurfaces in\textbf{\ }$\mathbb{R}^{n+1}$}

Let $x=\left( x_{1},x_{2},...,x_{n}\right) $ denote the orthogonal
coordinate system in $\mathbb{R}^{n}$ and \newline
$z:\mathbb{R}^{n}\longrightarrow \mathbb{R}$, $z=z\left( y\right) ,$ be a
smooth function, where 
\begin{equation}
y=\left( y_{1},y_{2},...,y_{n}\right) ,\text{ }y_{i}=%
\sum_{j=1}^{n}a_{ij}x_{j},\text{ }a_{ij}\in \mathbb{R},\text{ }i=1,2,...,n. 
\tag{3.1}
\end{equation}%
If $A=\left( a_{ij}\right) $ is a non-orthogonal $n\times n-$matrix and $%
\det \left( A\right) \neq 0,$ then we call the graph of $z\left( y\right) $
in $\mathbb{R}^{n+1}$ \textit{affine graph} of $z\left( x\right) $ and $%
\left( y_{1},y_{2},...,y_{n}\right) $ \textit{affine parameter coordinates}.

Hence we provide the following result to use later.

\begin{lemma}
Let $z\left( y\right) $ be a smooth real-valued function on $\mathbb{R}^{n}$%
, where $y$ is the affine parameter coordinates given by $\left( 3.1\right)
. $ Then the following relation holds:%
\begin{equation}
\det \left[ Hess\left( z\left( x\right) \right) \right] =\det \left[ A\right]
^{2}\det \left[ Hess\left( z\left( y\right) \right) \right]  \tag{3.2}
\end{equation}%
for $x=\left( x_{1},x_{2},...,x_{n}\right) .$
\end{lemma}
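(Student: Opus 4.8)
The plan is to realize the change of variables $y=Ax$ as a matrix congruence acting on the Hessian, and then read off the determinant identity. First I would apply the chain rule to the first-order partials. Since $y_{i}=\sum_{j}a_{ij}x_{j}$ gives $\partial y_{k}/\partial x_{j}=a_{kj}$, one gets
\[
\frac{\partial z}{\partial x_{j}}=\sum_{k=1}^{n}\frac{\partial z}{\partial y_{k}}\,a_{kj}.
\]

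Next I would differentiate once more with respect to $x_{i}$. The decisive observation is that the change of coordinates $(3.1)$ is \emph{affine}, so each coefficient $\partial y_{l}/\partial x_{i}=a_{li}$ is constant and contributes no second-order term; the only $x_{i}$-dependence remaining sits inside $\partial z/\partial y_{k}$. Applying the chain rule to that factor yields
\[
\frac{\partial^{2}z}{\partial x_{i}\partial x_{j}}=\sum_{k,l=1}^{n}a_{li}\,\frac{\partial^{2}z}{\partial y_{k}\partial y_{l}}\,a_{kj},
\]
which is exactly the $(i,j)$ entry of $A^{T}\,Hess(z(y))\,A$. Hence $Hess(z(x))=A^{T}\,Hess(z(y))\,A$. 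Taking determinants of both sides, using multiplicativity together with $\det(A^{T})=\det(A)$, gives
\[
\det[Hess(z(x))]=\det(A^{T})\det[Hess(z(y))]\det(A)=\det(A)^{2}\det[Hess(z(y))],
\]
which is precisely $(3.2)$.

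There is no genuine obstacle in this computation; the single point that requires attention is that the linearity (affineness) of $(3.1)$ makes the second derivatives of the $y_{l}$ vanish, so that the Hessian transforms as the clean congruence $A^{T}HA$ rather than acquiring extra correction terms (as it would under a nonlinear substitution). This is exactly what produces the factor $\det(A)^{2}$ and makes the identity hold with no further hypotheses beyond smoothness of $z$.
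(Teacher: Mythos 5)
Your proposal is correct and follows essentially the same route as the paper: compute the first and second partials by the chain rule, recognize $Hess(z(x))=A^{T}\cdot Hess(z(y))\cdot A$ as a matrix congruence, and take determinants. The paper writes out the Hessian entries explicitly before identifying the matrix product, but the argument is identical.
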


\begin{proof}
The partial derivatives of $z$ with respect to $x_{i},$ $1\leq i\leq n$ ,
gives%
\begin{equation*}
z_{,x_{i}}=\sum_{k=1}^{n}a_{ki}z_{,y_{k}},\text{ }z_{,x_{i}x_{j}}=%
\sum_{k,l=1}^{n}a_{ki}a_{lj}z_{,y_{l}y_{k}},\text{ }1\leq j\leq n.
\end{equation*}%
Then the Hessian of $z\left( x\right) $ follows%
\begin{equation}
Hess\left( z\left( x\right) \right) =%
\begin{bmatrix}
\sum_{k,l=1}^{n}a_{k1}a_{l1}z_{,y_{l}y_{k}} & 
\sum_{k,l=1}^{n}a_{k1}a_{l2}z_{,y_{l}y_{k}} & ... & 
\sum_{k,l=1}^{n}a_{k1}a_{ln}z_{,y_{l}y_{k}} \\ 
\sum_{k,l=1}^{n}a_{k2}a_{l1}z_{,y_{l}y_{k}} & 
\sum_{k,l=1}^{n}a_{k2}a_{l2}z_{,y_{l}y_{k}} & ... & 
\sum_{k,l=1}^{n}a_{k2}a_{ln}z_{,y_{l}y_{k}} \\ 
\vdots & \vdots & \vdots & \vdots \\ 
\sum_{k,l=1}^{n}a_{kn}a_{l1}z_{,y_{l}y_{k}} & 
\sum_{k,l=1}^{n}a_{kn}a_{l2}z_{,y_{l}y_{k}} & ... & 
\sum_{k,l=1}^{n}a_{kn}a_{ln}z_{,y_{l}y_{k}}%
\end{bmatrix}%
.  \tag{3.3}
\end{equation}%
By considering matrix multiplication in $\left( 3.3\right) $ we deduce that 
\begin{equation}
Hess\left( z\left( x\right) \right) =A^{T}\cdot Hess\left( z\left( y\right)
\right) \cdot A,  \tag{3.4}
\end{equation}%
where $A^{T}$ denotes the transpose of $A.$ Thus by $\left( 3.4\right) $ we
obtain (3.2).
\end{proof}

If $\det \left( A\right) \neq 0,$ Lemma 3.1 immediately implies the
following trivial result

\begin{corollary}
A graph of a given smooth real-valued function is flat if and only if so is
its affine graph in $\mathbb{R}^{n+1}$.
\end{corollary}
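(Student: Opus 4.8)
The plan is to reduce the flatness of each graph to an algebraic condition on its Hessian and then to quote Lemma 3.1 directly. First I would recall the Gauss-Kronecker curvature formula (2.1): for the graph hypersurface of a smooth function $w$ on $\mathbb{R}^{n}$ one has
\begin{equation*}
K=\frac{\det \left( Hess\left( w\right) \right) }{\left( 1+\sum_{i=1}^{n}\left( w_{,x_{i}}\right) ^{2}\right) ^{\frac{n+2}{2}}}.
\end{equation*}
The denominator here is never smaller than $1$, hence strictly positive at every point, so $K$ vanishes at a point precisely when $\det \left( Hess\left( w\right) \right) $ does. By the definition of flatness ($K\equiv 0$), the graph of $w$ is flat if and only if $\det \left( Hess\left( w\right) \right) \equiv 0$.

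Next I would apply this criterion to the two hypersurfaces in question. The graph of the given function, taken in its natural variables $y=\left( y_{1},\dots ,y_{n}\right) $, is flat if and only if $\det \left( Hess\left( z\left( y\right) \right) \right) \equiv 0$; the affine graph, written in the coordinates $x=\left( x_{1},\dots ,x_{n}\right) $ through the substitution (3.1), is flat if and only if $\det \left( Hess\left( z\left( x\right) \right) \right) \equiv 0$. Lemma 3.1 relates these two quantities by $\det \left( Hess\left( z\left( x\right) \right) \right) =\det \left( A\right) ^{2}\det \left( Hess\left( z\left( y\right) \right) \right) $. Since $A$ is assumed regular, $\det \left( A\right) ^{2}>0$, and so the factor $\det \left( A\right) ^{2}$ cannot alter the zero set: $\det \left( Hess\left( z\left( x\right) \right) \right) $ vanishes at a point exactly when $\det \left( Hess\left( z\left( y\right) \right) \right) $ does. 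Combining this with the flatness criterion yields the stated equivalence.

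There is no real obstacle here, the statement being a direct corollary of Lemma 3.1; the only point requiring a remark is the strict positivity of the denominator in (2.1), which is what allows one to pass freely between the condition $K=0$ and the purely linear-algebraic condition $\det \left( Hess\right) =0$. The regularity hypothesis $\det \left( A\right) \neq 0$ is used exactly once, to guarantee that multiplication by $\det \left( A\right) ^{2}$ preserves the vanishing locus of the Hessian determinant.
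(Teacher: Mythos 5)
Your proof is correct and follows exactly the route the paper intends: the paper gives no explicit argument, simply declaring the corollary an immediate consequence of Lemma 3.1, and your write-up supplies the two details implicit in that claim, namely that the strictly positive denominator in (2.1) reduces flatness to the vanishing of $\det\left(Hess\right)$, and that the positive factor $\det\left(A\right)^{2}$ in (3.2) preserves that vanishing locus. Nothing further is needed.
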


In particular, the affine graph of (1.1), so-called \textit{affine
translation hypersurface}, has the form 
\begin{equation}
z\left( x_{1},x_{2},...,x_{n}\right) =f_{1}\left( y_{1}\right) +f_{2}\left(
y_{2}\right) +...+f_{n}\left( y_{n}\right) ,\text{ }z=x_{n+1},  \tag{3.5}
\end{equation}%
where $f_{1},f_{2},...,f_{n}$ are arbitrary nonzero smooth functions and $%
\left( y_{1},y_{2},...,y_{n}\right) $ is affine parameter coordinates given
by $\left( 3.1\right) .$ Remark that such a hypersurface reduces to the
standard translation hypersurface, if $A$ is an orthogonal matrix.

Denote $A^{-1}=\left( a^{ij}\right) $ the inverse matrix of $A=\left(
a_{ij}\right) .$ Then, by a change of parameter, the affine translation
hypersurface $M^{n}$ has a parameterization%
\begin{equation}
\left. 
\begin{array}{l}
r\left( y_{1},y_{2},..,y_{n}\right) =\left(
\sum_{i=1}^{n}a^{1i}y_{i},\sum_{i=1}^{n}a^{2i}y_{i},...,\sum_{i=1}^{n}f_{i}%
\left( y_{i}\right) \right)  \\ 
=\underset{\alpha _{1}}{\underbrace{\left(
a^{11}y_{1},a^{21}y_{1},...,f_{1}\left( y_{1}\right) \right) }}+\underset{%
\alpha _{2}}{\underbrace{\left( a^{12}y_{2},a^{22}y_{2},...,f_{2}\left(
y_{2}\right) \right) }}+...+ \\ 
+\underset{\alpha _{n}}{\underbrace{\left(
a^{1n}y_{n},a^{2n}y_{n},...,f_{n}\left( y_{n}\right) \right) }}.%
\end{array}%
\right.   \tag{3.6}
\end{equation}%
Since $A$ is non-orthogonal, so is $A^{-1}$ and this yields that the row and
column vectors of $A^{-1}$ form a non-orthogonal system. Thereby, the
generating curves $\alpha _{1},\alpha _{2},...,\alpha _{n}$ lie in
non-orthogonal planes.
\subsection{Proof of Theorem 1.1.}

We purpose to describe the affine translation hypersurfaces in $\mathbb{%
R}^{n+1}$ with CGKC. For this we need to fix some notations to use in
remaining part:%
\begin{equation}
f_{k}^{\prime }=\frac{df_{k}}{dy_{k}},\text{ }f_{k}^{\prime \prime }=\frac{%
d^{2}f_{k}}{dy_{k}^{2}},\text{ }k=1,2,...,n,  \tag{3.7}
\end{equation}%
and 
\begin{equation}
z_{,x_{i}}=\sum_{k=1}^{n}a_{ki}f_{k}^{\prime },\text{ }z_{,x_{i}x_{j}}=%
\sum_{k=1}^{n}a_{ki}a_{kj}f_{k}^{\prime \prime },\text{ }i,j=1,2,...,n. 
\tag{3.8}
\end{equation}%
By (3.7), the Hessian of $z(y)$ turns to 
\begin{equation}
Hess\left( z\left( y\right) \right) =%
\begin{bmatrix}
f_{1}^{\prime \prime } & 0 & ... & 0 \\ 
0 & f_{2}^{\prime \prime } & ... & 0 \\ 
\vdots & \vdots & \vdots & \vdots \\ 
0 & 0 & ... & f_{n}^{\prime \prime }%
\end{bmatrix}%
.  \tag{3.9}
\end{equation}%
Substituting (3.9) into (3.2) leads to 
\begin{equation}
\det \left[ Hess\left( z(x)\right) \right] =\det \left[ A\right]
^{2}f_{1}^{\prime \prime }f_{2}^{\prime \prime }...f_{n}^{\prime \prime }, 
\tag{3.10}
\end{equation}%
where $x=\left( x_{1},x_{2},...,x_{n}\right) $.

Now we assume that the affine translation hypersurface $M^{n}$ in $\mathbb{R}%
^{n+1}$ has $K=K_{0}=const.$ Then (2.1), (3.7) and $\left( 3.10\right) $
imply that%
\begin{equation}
K_{0}=\frac{\det \left( A\right) ^{2}\left( f_{1}^{\prime \prime
}f_{2}^{\prime \prime }...f_{n}^{\prime \prime }\right) }{\left(
1+\sum_{i=1}^{n}\left( \sum_{j=1}^{n}a_{ji}f_{j}^{\prime }\right)
^{2}\right) ^{\frac{n+2}{2}}}.  \tag{3.11}
\end{equation}

\begin{enumerate}
\item[\textbf{Case 1}] If $K_{0}=0$ in $\left( 3.11\right) ,$ then at least
one of $f_{1},f_{2},...,f_{n}$ is a linear function with respect to the
variables $y_{1},y_{2},...,y_{n}$, respectively. Without lose of generality,
we may assume that $f_{1}\left( y_{1}\right) =cy_{1}+d,$ $c,d\in \mathbb{R}$%
. Considering this one into (3.6), we conclude%
\begin{equation*}
r\left( y_{1},y_{2},..,y_{n}\right) =y_{1}\left( a^{11},a^{21},...,c\right)
+\left(
\sum_{i=2}^{n}a^{1i}y_{i},\sum_{i=2}^{n}a^{2i}y_{i},...,d+%
\sum_{i=2}^{n}f_{i}\left( y_{i}\right) \right) ,
\end{equation*}%
which implies that $M^{n}$ turns to a cylinder.

\item[\textbf{Case 2}] Otherwise, i.e. $K_{0}\neq 0,$ the functions $%
f_{1},f_{2},...,f_{n}$ have to be non-linear. Put $W:=1+\sum_{i=1}^{n}\left(
\sum_{j=1}^{n}a_{ji}f_{j}^{\prime }\right) ^{2}.$ Taking partial derivative
of $\left( 3.11\right) $ with respect to $y_{p},$ $p=1,2,...,n,$ gives%
\begin{equation}
\left( f_{1}^{\prime \prime }f_{2}^{\prime \prime }...f_{p}^{\prime \prime
\prime }...f_{n}^{\prime \prime }\right) W=\left( n+2\right) \left(
f_{1}^{\prime \prime }f_{2}^{\prime \prime }...\left( f_{p}^{\prime \prime
}\right) ^{2}...f_{n}^{\prime \prime }\right) \left(
\sum_{i,j=1}^{n}a_{pi}a_{ji}f_{j}^{\prime }\right) .  \tag{3.12}
\end{equation}%
Since $f_{1}^{\prime \prime }f_{2}^{\prime \prime }...f_{n}^{\prime \prime
}\neq 0,$ $\left( 3.12\right) $ can be rewritten as%
\begin{equation}
\frac{f_{p}^{\prime \prime \prime }}{\left( n+2\right) \left( f_{p}^{\prime
\prime }\right) ^{2}}=\frac{\sum_{i,j=1}^{n}a_{pi}a_{ji}f_{j}^{\prime }}{W}.
\tag{3.13}
\end{equation}%
The partial derivative of (3.13) with respect to $y_{q}$, $p\neq
q=1,2,...,n, $ gives 
\begin{equation}
W\sum_{i,j=1}^{n}a_{pi}a_{qi}-2\left(
\sum_{i,j=1}^{n}a_{qi}a_{ji}f_{j}^{\prime }\right) \left(
\sum_{i,j=1}^{n}a_{pi}a_{ji}f_{j}^{\prime }\right) =0.  \tag{3.14}
\end{equation}%
After twice taking the partial derivative of $\left( 3.13\right) $ with
respect to $y_{q}$ yields%
\begin{equation}
\sum_{i,j=1}^{n}a_{pi}a_{qi}=0.  \tag{3.15}
\end{equation}%
Substituting $\left( 3.15\right) $ into $\left( 3.14\right) $ leads to
either 
\begin{equation}
\sum_{i,j=1}^{n}a_{qi}a_{ji}f_{j}^{\prime }=0\text{ or }%
\sum_{i,j=1}^{n}a_{pi}a_{ji}f_{j}^{\prime }=0.  \tag{3.16}
\end{equation}%
Taking partial derivative in the second equality of $\left( 3.16\right) $
with respect to $y_{p}$ gives%
\begin{equation*}
f_{p}^{\prime \prime }\sum_{i=1}^{n}\left( a_{pi}\right) ^{2}=0
\end{equation*}%
which implies $a_{p1}=a_{p2}=...=a_{pn}=0$. This is a contradiction since $%
\det (A)\neq 0$, which completes the proof.
\end{enumerate}

\section{Further applications}

Before introducing the affine translation hypersurfaces in $\mathbb{I}^{n+1}$, let us
reconsider the notion of translation hypersurface in $\mathbb{I}^{n+1}.$ By means of the
isotropic motions given by (2.3), a \textit{translation hypersurface} in $\mathbb{I}^{n+1}$ generated by translating the curves lying in orthogonal isotropic planes is the graph of the form $\left(1.1\right). $ Such hypersurfaces in $\mathbb{I}^{n+1}$ with constant relative curvature (CRC) and constant isotropic mean curvature (CIMC) were provided in \cite {1}.

Therefore, as similar to Euclidean case, we can state that an \textit{affine
translation hypersurface} in $\mathbb{I}^{n+1}$ is the graph of a function
given via $\left( 3.1\right) $ and $\left( 3.5\right) .$ Point out that the generating curves for this one lie in non-orthogonal isotropic planes. So, by having in mind that the generating curves may also lie non-isotropic planes, the problems given in the Introduction can be also considered in the isotropic spaces. 

By (2.13) and (3.9), for an affine translation hypersurface with CRC $K_{0}$ in $\mathbb{I}^{n+1}$, we get%
\begin{equation}
K_{0}=\det \left( A \right) ^{2}f_{1}^{\prime \prime }f_{2}^{\prime \prime
}...f_{n}^{\prime \prime },  \tag{4.1}
\end{equation}%
where $f_{i}^{\prime \prime }=\frac{d^{2}f_{i}}{dy_{i}^{2}}$\ and $%
(y_{1},y_{2},...,y_{n})$ the affine parameter coordinates given by (3.1).
Hence (4.1) immediately implies that $K_{0}$ vanishes when at least one $%
f_{1},f_{2},...,f_{n}$ is a linear function with respect to the variables $%
y_{1},y_{2},...,y_{n}$, respectively. Suppose that $K_{0}\neq 0.$ Taking
partial derivative of $\left( 4.1\right) $ with respect to $y_{p}$ leads to%
\begin{equation*}
f_{1}^{\prime \prime }f_{2}^{\prime \prime }...f_{p}^{\prime \prime \prime
}...f_{n}^{\prime \prime }=0,
\end{equation*}%
namely%
\begin{equation*}
f_{p}\left( y_{p}\right) =c_{p}y_{p}^{2}+d_{p}y_{p}+e_{p},\text{ }p=1,2,...,n
\end{equation*}%
for some constants $c_{p},d_{p},e_{p}\in \mathbb{R}$, $c_{p}\neq 0$ and $%
c_{1}c_{2}...c_{n}=\frac{K_{0}}{2^{n}\det \left( A \right) ^{2}}.$
Accordingly the following result can be expessed:

\begin{theorem}
Let $M^{n}$ be an affine translation hypersurface in $\mathbb{I}^{n+1}$ with $K_{0}.$ Then, it is
either congruent to a cylinder $\left(
K_{0}=0\right) $ or given by $\left( K_{0}\neq 0\right) $%
\begin{equation*}
\left\{ 
\begin{array}{l}
z\left( x_{1},x_{2},...,x_{n}\right)
=\sum_{i=1}^{n}c_{i}y_{i}^{2}+d_{i}y_{i}+e_{i}, \\ 
c_{i},d_{i},e_{i}\in \mathbb{R},\text{ } c_{i}\neq 0,\text{ }c_{1}c_{2}...c_{n}=%
\frac{K_{0}}{2^{n}\det \left( A \right) ^{2}},\text{ }i=1,2,...,n,%
\end{array}%
\right.
\end{equation*}%
where $\left( y_{1},y_{2},...,y_{n}\right) $ is the affine parameter
coordinates given by $\left( 3.1\right) .$
\end{theorem}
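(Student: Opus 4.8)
The plan is to reduce the constancy of the relative curvature to a decoupled system of ordinary differential equations in the generating functions $f_1,\dots,f_n$, exploiting the fact that in $\mathbb{I}^{n+1}$ the relative curvature is literally the Hessian determinant with \emph{no} normalizing denominator. First I would record, from $(2.13)$ together with Lemma 3.1 (equation $(3.2)$) and the diagonal form $(3.9)$ of $\mathrm{Hess}(z(y))$ in affine coordinates, the basic identity
\begin{equation*}
K = \det(\mathrm{Hess}(z(x))) = \det(A)^2\, f_1'' f_2'' \cdots f_n''.
\end{equation*}
Imposing $K \equiv K_0 = \text{const}$ then yields $(4.1)$, and this single identity is the whole computational content; everything afterward is elementary.

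Next I would split into two cases according to whether $K_0$ vanishes. If $K_0 = 0$, then since $\det(A) \neq 0$ the product $f_1'' \cdots f_n''$ must vanish identically, so at least one generating function, say $f_1$, is affine in $y_1$; substituting $f_1(y_1) = c\,y_1 + d$ into the parametrization $(3.6)$ splits off a straight-line factor exactly as in Case 1 of the proof of Theorem 1.1, and $M^n$ is congruent to a cylinder.

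If $K_0 \neq 0$, then every $f_i'' \neq 0$. The key (and essentially only) point needing care is that, because $A$ is invertible, the linear map $x \mapsto y$ is a diffeomorphism of $\mathbb{R}^n$, so $y_1,\dots,y_n$ may be treated as independent coordinates; consequently $\partial f_j''/\partial y_p = 0$ for $j \neq p$ while $\partial f_p''/\partial y_p = f_p'''$. Differentiating $(4.1)$ with respect to $y_p$ and using the constancy of the left-hand side gives
\begin{equation*}
\det(A)^2\, f_1'' \cdots f_{p-1}''\, f_p'''\, f_{p+1}'' \cdots f_n'' = 0,
\end{equation*}
and cancelling the nonzero factors forces $f_p''' = 0$ for every $p$. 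Hence each $f_p$ is a genuine quadratic $f_p(y_p) = c_p y_p^2 + d_p y_p + e_p$ with $c_p \neq 0$ (the latter because $f_p'' = 2c_p \neq 0$). Feeding $f_p'' = 2c_p$ back into $(4.1)$ produces $K_0 = 2^n \det(A)^2\, c_1 \cdots c_n$, which is precisely the stated normalization $c_1 \cdots c_n = K_0/\bigl(2^n \det(A)^2\bigr)$, and this completes the description.

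I do not expect a genuine obstacle here: once $(4.1)$ is in hand the argument is a one-line differentiation per variable, in sharp contrast with the Euclidean Theorem 1.1, where the denominator $\bigl(1 + \sum (z_{,x_i})^2\bigr)^{(n+2)/2}$ in $(2.1)$ couples the variables and ultimately excludes nonzero constant curvature altogether. The only places to be slightly careful are the independence of the $y_p$ invoked above and the bookkeeping of the product constant $c_1 \cdots c_n$; both are routine given $\det(A) \neq 0$.
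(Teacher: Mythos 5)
Your proposal is correct and follows essentially the same route as the paper: both derive $K_{0}=\det(A)^{2}f_{1}''\cdots f_{n}''$ from $(2.13)$, $(3.2)$ and $(3.9)$, handle $K_{0}=0$ by observing that some $f_{i}$ must be affine (giving a cylinder), and handle $K_{0}\neq 0$ by differentiating in $y_{p}$ to force $f_{p}'''=0$ and reading off the constraint $c_{1}\cdots c_{n}=K_{0}/(2^{n}\det(A)^{2})$. No substantive differences.
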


Next we assume that an affine translation hypersurface $M^{n}$ in $\mathbb{I}%
^{n+1}$ has CIMC $H_{0}$. Hence we have from
(2.12) and (3.7) that%
\begin{equation}
nH_{0}=\sum_{i,j=1}^{n}a_{ij}^{2}f_{i}^{\prime \prime }.  \tag{4.2}
\end{equation}%
Taking partial derivative of $\left( 4.2\right) $ with respect to $y_{p},$ $%
p=1,2,...,n,$ gives 
\begin{equation*}
\left(\sum_{i=1}^{n}a_{pi}^{2}\right)f_{p}^{\prime \prime \prime}=0
\end{equation*}
or 
\begin{equation*}
f_{p}\left( y_{p}\right) =\frac{c_{p}}{2\sum_{i=1}^{n}a_{pi}^{2}}%
y_{p}^{2}+d_{p}y_{p}+e_{p}
\end{equation*}%
for some constants $c_{p},d_{p},e_{p}$ such that $%
\sum_{i=1}^{n}c_{i}=nH_{0}. $

Therefore we can present the following result.

\begin{theorem}
Let $M^{n}$ be an affine translation hypersurface in $\mathbb{I}^{n+1}$ with CIMC $H_{0}.$ Then, it is given in explicit form%
\begin{equation*}
\left\{ 
\begin{array}{l}
z\left( x_{1},x_{2},...,x_{n}\right) =\sum_{i=1}^{n}\left(\frac{c_{i}/2}{%
\sum_{j=1}^{n}a_{ij}^{2}}\right)y_{i}^{2}+d_{i}y_{i}+e_{i}, \\ 
\sum_{i=1}^{n}c_{i}=nH_{0},c_{i},d_{i},e_{i}\in \mathbb{R},%
\end{array}%
\right.
\end{equation*}%
where $\left( y_{1},y_{2},...,y_{n}\right) $ is the affine parameter
coordinates given by $\left( 3.2\right) .$ In particular, $M^{n}$ is isotropic minimal provided $\sum_{i=1}^{n}c_{i}=0.$
\end{theorem}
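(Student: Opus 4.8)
The plan is to reduce the constancy of the isotropic mean curvature to an elementary ordinary differential equation for each generating function $f_i$, exploiting the fact that in $\mathbb{I}^{n+1}$ the mean curvature is governed by the \emph{linear} Laplacian rather than a nonlinear quotient. First I would compute the isotropic mean curvature of the affine translation graph. Combining the formula $H=\frac{1}{n}\bigtriangleup z$ from (2.12) with the induced Laplacian (2.6) and the second partial derivatives recorded in (3.8), I obtain
\begin{equation*}
nH=\sum_{i=1}^{n}z_{,x_{i}x_{i}}=\sum_{i=1}^{n}\left(\sum_{j=1}^{n}a_{ij}^{2}\right)f_{i}^{\prime\prime}(y_{i}),
\end{equation*}
which is exactly relation (4.2). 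The crucial structural feature is that each summand depends on a single affine coordinate $y_{i}$, so the variables separate.

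Next, imposing $H\equiv H_{0}$ constant, I would differentiate (4.2) with respect to one affine coordinate $y_{p}$ at a time. Because $f_{i}^{\prime\prime}$ is a function of $y_{i}$ alone, every term with $i\neq p$ is annihilated and one is left with $\left(\sum_{j=1}^{n}a_{pj}^{2}\right)f_{p}^{\prime\prime\prime}(y_{p})=0$ for each $p$. Here the regularity of $A$ enters: since $\det A\neq 0$, no row of $A$ can vanish, hence $\sum_{j=1}^{n}a_{pj}^{2}>0$ and this factor may be divided out, forcing $f_{p}^{\prime\prime\prime}\equiv 0$.

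Integrating $f_{p}^{\prime\prime\prime}=0$ twice shows each $f_{p}$ is a quadratic polynomial; writing the (constant) second derivative as $f_{p}^{\prime\prime}=c_{p}/\sum_{j=1}^{n}a_{pj}^{2}$ produces precisely the claimed leading coefficient $\frac{c_{p}/2}{\sum_{j=1}^{n}a_{pj}^{2}}$ for $y_{p}^{2}$, together with free linear and constant terms $d_{p},e_{p}$. Substituting these constant second derivatives back into (4.2) collapses the weighted sum, since each row norm cancels, yielding the normalization $\sum_{i=1}^{n}c_{i}=nH_{0}$; setting $H_{0}=0$ gives the isotropic minimal case.

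I do not anticipate a serious obstacle, since the isotropic setting linearizes the mean-curvature equation and the affine parametrization diagonalizes the Hessian (cf.\ (3.9)); the only point requiring care is the non-vanishing of each row norm $\sum_{j}a_{pj}^{2}$, which must be justified from $\det A\neq 0$ before dividing. I would also briefly record the converse — that any $z$ of the stated quadratic form indeed has $H\equiv H_{0}$ — which is immediate upon re-substituting into (4.2).
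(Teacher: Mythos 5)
Your proposal is correct and follows essentially the same route as the paper: derive relation (4.2) for $nH_{0}$, differentiate with respect to each affine coordinate $y_{p}$ to force $f_{p}^{\prime\prime\prime}=0$, integrate to get the quadratic form, and substitute back to obtain the normalization $\sum_{i=1}^{n}c_{i}=nH_{0}$. Your explicit justification that each row norm $\sum_{j}a_{pj}^{2}$ is positive (from $\det A\neq 0$) before dividing is a detail the paper leaves implicit, but otherwise the arguments coincide.
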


Finally we aim to observe the affine translation hypersurface $M^{n}$ in $%
\mathbb{I}^{n+1}$ whose the coordinate functions are eigenfunctions of the
Laplacian, i.e., that satisfies the condition%
\begin{equation}
\bigtriangleup r_{k}=\lambda _{k}r_{k},\text{ }\lambda _{k}\in \mathbb{R},%
\text{ }k=1,2,...,n+1,  \tag{4.3}
\end{equation}%
where $r_{k}$ is the coordinate function of the position vector of an
arbitray point on $M^{n}$ and $\bigtriangleup $ the Laplace operator of $%
M^{n}$ with respect to the induced metric from $\mathbb{I}^{n+1}$.

In the particular case $\lambda _{1}=\lambda _{2}=...=\lambda _{n+1}=\lambda
,$ the condition $\left( 4.3\right) $ was firstly treated to Riemannian
submanifolds by Tahakashi \cite{29}. Then Garay \cite{10} generalized this
condition as follows:%
\begin{equation*}
\bigtriangleup r=Ar, \text{ } A \in \mathbb{R}_{n+1}^{n+1}.
\end{equation*}%
One is also related to the notion of \textit{submanifolds of finite type} 
conjectured by Chen (see \cite{4,7}).

An affine translation hypersurface $M^{n}$ in $\mathbb{I}^{n+1}$ is of
the form%
\begin{equation*}
r\left( x_{1},x_{2},...,x_{n}\right) =\left(
x_{1},x_{2},...,x_{n},f_{1}\left( y_{1}\right) +f_{2}\left( y_{2}\right)
+...+f_{n}\left( y_{n}\right) \right) ,
\end{equation*}%
where $\left( y_{1},y_{2},...,y_{n}\right) $ is the affine parameter
coordinates given by $\left( 3.1\right) .$ Let us put 
\begin{equation}
r_{1}=x_{1},\text{ }r_{2}=x_{2},...,\text{ }r_{n}=x_{n}  \tag{4.4}
\end{equation}
and%
\begin{equation}
r_{n+1}=f_{1}\left( y_{1}\right) +f_{2}\left( y_{2}\right) +...+f_{n}\left(
y_{n}\right) .  \tag{4.5}
\end{equation}%
From (2.6), $\left( 4.4\right) $ and $\left( 4.5\right) ,$ we conclude that%
\begin{equation}
\bigtriangleup r_{1}=\bigtriangleup r_{2}=...=\bigtriangleup r_{n}=0\text{
and }\bigtriangleup r_{n+1}=\sum_{i,j=1}^{n}a_{ij}^{2}f_{i}^{\prime \prime }.
\tag{4.6}
\end{equation}%
Now suppose that $M^{n}$ holds $\left( 4.3\right) .$ Then $\left( 4.6\right)$
implies $\lambda _{1}=\lambda _{2}=...=\lambda _{n}=0$ and the following
system of ordinary differential equations: 
\begin{equation}
\sum_{i,j=1}^{n}a_{ij}^{2}f_{i}^{\prime \prime }=\lambda
\sum_{i=1}^{n}f_{i}, \text{ }\lambda _{n+1}=\lambda.  \tag{4.7}
\end{equation}%
In the case $\lambda =0,$ $M^{n}$ becomes isotropic minimal stated already
via Theorem 4.2. Hence it is meaningful to assume $\lambda \neq 0.$ Since $%
f_{1},f_{2},...,f_{n}$ depend on the variables $y_{1},y_{2},...,y_{n}$,
(4.7) turns to 
\begin{equation}
\sum_{j=1}^{n}a_{ij}^{2}f_{i}^{\prime \prime }-\lambda f_{i}=\mu _{i}, 
\tag{4.8}
\end{equation}
where $\mu _{i}$ are some constants such that $\sum_{i=1}^{n}\mu _{i}=0.$ If 
$\lambda >0$ in $\left( 4.8\right) ,$ then by solving it we obtain%
\begin{equation*}
f_{i}\left( y_{i}\right) =c_{i}\exp \left( \sqrt{\frac{\lambda }{%
\sum_{j=1}^{n}a_{ij}^{2}}}y_{i}\right) +d_{i}\exp \left( -\sqrt{\frac{%
\lambda }{\sum_{j=1}^{n}a_{ij}^{2}}}y_{i}\right) -\frac{\mu _{i}}{\lambda },
\end{equation*}%
and if $\lambda <0$%
\begin{equation*}
f_{i}\left( y_{i}\right) =c_{i}\cos \left( \sqrt{\frac{-\lambda }{%
\sum_{j=1}^{n}a_{ij}^{2}}}y_{i}\right) +d_{i}\sin \left( \sqrt{\frac{%
-\lambda }{\sum_{j=1}^{n}a_{ij}^{2}}}y_{i}\right) -\frac{\mu _{i}}{\lambda },
\end{equation*}%
where $c_{i},d_{i}$ are some constants.

Therefore we have proved next result.

\begin{theorem}
Let $M^{n}$ be a non isotropic minimal affine translation hypersurface in $%
\mathbb{I}^{n+1}$ satisfying $\bigtriangleup r_{k}=\lambda _{k}r_{k}$. Then $%
\left( \lambda_{1},\lambda_{2},...,\lambda_{n+1}\right) =\left(
0,0,...,\lambda \neq 0\right) $ and $M^{n}$ is congruent to the graph of the
function either

\begin{equation*}
z\left( x_{1},x_{2},...,x_{n}\right) =\sum_{i=1}^{n}c_{i}\exp \left( \sqrt{%
\frac{\lambda }{\sum_{j=1}^{n}a_{ij}^{2}}}y_{i}\right) +d_{i}\exp \left( -%
\sqrt{\frac{\lambda }{\sum_{j=1}^{n}a_{ij}^{2}}}y_{i}\right)
\end{equation*}%
or%
\begin{equation*}
z\left( x_{1},x_{2},...,x_{n}\right) =\sum_{i=1}^{n}c_{i}\cos \left( \sqrt{%
\frac{-\lambda }{\sum_{j=1}^{n}a_{ij}^{2}}}y_{i}\right) +d_{i}\sin \left( 
\sqrt{\frac{-\lambda }{\sum_{j=1}^{n}a_{ij}^{2}}}y_{i}\right) ,
\end{equation*}%
where $\left( y_{1},y_{2},...,y_{n}\right) $ is the affine parameter
coordinates given by $\left( 3.1\right) $ and $c_{i},d_{i}$ some constants.
\end{theorem}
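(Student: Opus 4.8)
The plan is to read condition (4.3) as a system of partial differential equations for the profile functions $f_1,\dots,f_n$ and then decouple it into $n$ ordinary differential equations. First I would fix the parameterization together with the coordinate functions (4.4)--(4.5) and compute the Laplacian (2.6) of each $r_k$ directly. For $k\le n$ this is immediate, since $r_k=x_k$ is linear in the base variables and hence $\bigtriangleup r_k=0$. For $k=n+1$ I would differentiate through $y_i=\sum_j a_{ij}x_j$: from $\partial r_{n+1}/\partial x_j=\sum_i a_{ij}f_i'$ one gets $\partial^2 r_{n+1}/\partial x_j^2=\sum_i a_{ij}^2 f_i''$, and summing over $j$ yields $\bigtriangleup r_{n+1}=\sum_{i,j}a_{ij}^2 f_i''$, which is exactly (4.6).

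Next I would extract the consequences of $\bigtriangleup r_k=\lambda_k r_k$. For each $k\le n$ we obtain $0=\lambda_k x_k$ identically on $M^n$, and since $x_k$ is not the zero function this forces $\lambda_k=0$, pinning down the first $n$ entries of the eigenvalue tuple. Writing $\lambda:=\lambda_{n+1}$, the surviving equation is (4.7), that is $\sum_{i,j}a_{ij}^2 f_i''=\lambda\sum_i f_i$. The decisive structural fact is that $A$ is invertible, so $(y_1,\dots,y_n)$ are genuine independent coordinates and each $f_i$ is a function of $y_i$ alone. Rewriting (4.7) as $\sum_i\bigl[(\sum_j a_{ij}^2)f_i''-\lambda f_i\bigr]=0$ and differentiating with respect to $y_k$ annihilates every summand except the $k$-th, which shows that $(\sum_j a_{kj}^2)f_k''-\lambda f_k$ is a constant $\mu_k$; feeding this back into the identity gives $\sum_i\mu_i=0$, which is (4.8).

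Finally I would solve the decoupled equations (4.8). The hypothesis that $M^n$ is not isotropic minimal rules out $\lambda=0$: by (2.12) that case is precisely the vanishing-isotropic-mean-curvature situation already classified in Theorem 4.2, so under the standing assumption $\lambda\ne 0$. Each (4.8) is then a constant-coefficient linear second-order ODE, whose general solution is a combination of real exponentials when $\lambda>0$ and of $\cos$/$\sin$ when $\lambda<0$, together with the particular solution $-\mu_i/\lambda$. These additive constants shift the height $z$ by a single overall constant, which is an isotropic translation, so they may be absorbed; the two displayed forms of the theorem then follow.

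The step I expect to be the main obstacle is the separation of variables in the previous paragraph: one must justify that an identity $\sum_i g_i(y_i)=0$ in the \emph{independent} variables $y_i$ forces each $g_i$ to be constant, and this is exactly where the non-degeneracy $\det(A)\ne 0$ is indispensable. Everything else is routine chain-rule bookkeeping and the solution of a standard linear ODE, the only mild point being that the same scalar $\lambda=\lambda_{n+1}$ appears in all $n$ decoupled equations, which is automatic here.
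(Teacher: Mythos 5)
Your proposal is correct and follows essentially the same route as the paper: computing the Laplacians via (2.6) to obtain (4.6), forcing $\lambda_1=\dots=\lambda_n=0$, decoupling (4.7) into the constant-coefficient ODEs (4.8) with $\sum_i\mu_i=0$, and solving according to the sign of $\lambda$. Your only addition is to spell out the separation-of-variables justification (differentiating in $y_k$ to see each $(\sum_j a_{kj}^2)f_k''-\lambda f_k$ is constant), a step the paper asserts without detail; note also that since $\sum_i\mu_i=0$ the particular-solution constants $-\mu_i/\lambda$ cancel exactly in the sum, so no translation is even needed to absorb them.
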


\section{Acknowledgement}
The results of this study were presented at the Conference Pure and Applied Differential Geometry - PADGE, Leuven-Belgium, 2017.

\end{document}